\newtheorem{theorem}{Theorem}[section]
\newtheorem{corollary}[theorem]{Corollary}
\theoremstyle{definition}
\theoremstyle{remark}
\numberwithin{equation}{section}
\begin{document}
\setcounter{page}{1}



\centerline{}

\centerline{}

\title[The Rainbow Connection Number of The Commuting Graph]{The Rainbow Connection Number of The Commuting Graph of a Finite Non-abelian Group}

\author[R.F. Umbara, A.N.M. Salman, P.E. Putri]{Rian Febrian Umbara${^1}$$^{,3}$, A.N.M. Salman$^{2}$$^{\dagger}$, Pritta Etriana Putri$^{2}$$^{\ddagger}$}

\address{$^{1}$  Doctoral Program in Mathematics, Faculty of Mathematics and Natural Sciences, Institut Teknologi Bandung, Bandung, Indonesia}
\email{\textcolor[rgb]{0.00,0.00,0.84}{30119014@mahasiswa.itb.ac.id}}

\address{$^{2}$ Combinatorial Mathematics Research Group, Faculty of Mathematics and Natural Sciences, Institut Teknologi Bandung, Indonesia}
\email{$^{\dagger}$\textcolor[rgb]{0.00,0.00,0.84}{msalman@itb.ac.id},$^{\ddagger}$\textcolor[rgb]{0.00,0.00,0.84}{etrianaputri@itb.ac.id}}

\address{$^{3}$  School of Computing, Telkom University, Bandung, Indonesia}
\email{\textcolor[rgb]{0.00,0.00,0.84}{rianum@telkomuniversity.ac.id}}


\subjclass[2010]{Primary 05C15; Secondary 05C25.}

\keywords{rainbow connection number, commuting graph, finite nonabelian group}


\begin{abstract}
A path in an edge-colored graph is called a rainbow path if every two distinct edges of the path have different colors. A graph whose every pair of vertices are linked by a rainbow path is called a rainbow-connected graph. The rainbow connection number of a graph is the minimum number of colors that are needed to color the edges of the graph such that the graph is rainbow-connected. In this paper, we determine the rainbow connection number of the commuting graph of a finite non-abelian group, which is a graph whose vertex set is a non-abelian group of finite order and two distinct elements of the group is adjacent in the graph if they commute. We also show that the rainbow connection number of the commuting graph of a finite group is related to the number of maximal abelian subgroups or the number of involutions that do not commute with any other non-identity element of the group. 
\end{abstract} \maketitle

\section{Introduction}
Let $G = (V(G), E(G))$ be a graph with $V(G)$ and $E(G)$ are its set of vertices and set of edges, respectively. Let $c$ be an edge coloring on $G$ which allows the adjacent edges of $G$ have the same color. A path $P$ betwen two vertices of $G$ is a \textit{rainbow path} under the coloring $c$ if every pair of edges of the path have different colors. If every two vertices of $G$ are connected by a raibow path, then $G$ is \textit{rainbow-conneted}. The minimum number of colors that are needed to color the edges of $G$ such that $G$ is rainbow-connected is called the \textit{rainbow connection number} of $G$, denoted by $rc(G)$. A rainbow coloring of $G$ that uses $rc(G)$ colors is called a \textit{minimum rainbow coloring} of $G$.

The study of rainbow connection number of a graph was initiated by Chartrand et al. in 2008~\cite{Char}. The concept of rainbow connection number was proposed to solve the problem of communication security between two agencies of US Government after the terrorist attack on September 11, 2001. The rainbow connection number of a graph serves as a metric to assess the security of a communication or computer network represented by the graph. Besides defining rainbow connection number of a graph, Chartrand et al. also determined the rainbow connection numbers of some graphs, such as complete graphs, trees, wheel graphs, bipartite graphs, and multipartite graphs~\cite{Char}. Other researchers have also studied the rainbow connection numbers of other graph families. Some of them are Fitriani et al. who studied the rainbow connection number of amalgamation of some graphs \cite{Fitri1} and comb product of graphs \cite{Fitri2} .

Over the last decade, some researchers have studied the rainbow connection numbers of some graphs of finite groups. A graph of a finite group is a graph whose vertex set is the elements of a finite group and two elements of the group are adjacent if they meet a certain condition. Some example of graphs of finite groups are Cayley graphs~\cite{Cay}, non-commuting graph of a group~\cite{Abdol}, commuting graph of a group~\cite{Ali}, undirected power graphs of semigroups~\cite{Chak}, the enhanced power graph of a group~\cite{Aalipour}, and the inverse graph of a finite group~\cite{Alfur}. Some studies have been conducted to study rainbow connection numbers of some graphs of finite groups, such as the (strong) rainbow connection numbers of Cayley graphs on abelian groups~\cite{Li}, rainbow 2-connection numbers of Cayley graphs~\cite{Lu}, rainbow connection number of the power graph of a finite group~\cite{Ma1}, rainbow connectivity of the non-commuting graph of a finite group~\cite{Wei}, rainbow connection numbers of Cayley graphs~\cite{Ma2}, rainbow connectivity in some Cayley graphs~\cite{Bau}, and rainbow connection number of the inverse graph of a finite group~(\cite{Umb},\cite{umb1}). 

Motivated by those studies, we study the rainbow connection number of the commuting graph of a finite nonabelian group. The commuting graph $CG(\Gamma, X)$, where $\Gamma$ is a finite group and $X$ is a nonempty subset of $\Gamma$, is a graph with $X$ as its set of vertices and $u,v\in X$ are adjacent if $u$ and $v$ commute in $\Gamma$~\cite{Ali}. If $X = \Gamma$, then $CG(\Gamma, X)$ is written as $CG(\Gamma)$. In this paper, we discuss $CG(\Gamma)$, where $\Gamma$ is a finite nonabelian group. We show that for a finite nonabelian group $\Gamma$ with a nontrivial center, the rainbow connection number of $CG(\Gamma)$ is related to the number of maximal abelian subgroups of $\Gamma$.  We also show that if $\Gamma$ is a finite nonabelian group with a trivial center, the rainbow connection number of $CG(\Gamma)$ determines the number of involutions of $\Gamma$ that do not commute with any other nonidentity element of $\Gamma$.

\section{Preliminaries}
\subsection{Some terminologies in Graph Theory}
We refer to \cite{Dies} for definitions and notations in Graph Theory that are not described in this text. A \textit{graph} is a pair $G=(V(G),E(G))$ of sets, where the elements of $E(G)$ are 2-element subsets of $V(G)$. An element of $V(G)$ is called a \textit{vertex} of $G$ and an element of $E(G)$ is called an \textit{edge} of $G$. A graph $G=(V(G),E(G))$ is \textit{finite} if $|V(G)|$ is finite. If $|V(G)|=0$ of $1$, then $G$ is called a \textit{trivial} graph. An edge $\epsilon=\{x,y\}$ is written as $\epsilon=xy$ or $\epsilon=yx$. Two vertices $x$ and $y$ of a graph $G$ are \textit{adjacent} if $xy$ is an edge of $G$. An edge $xy$ is \textit{incident} with both $x$ and $y$. Both $x$ and $y$ are the \textit{ends} of the edge $xy$. A vertex of a graph $G$ which is adjacent to one and only one vertex of $G$ is called a \textit{pendant} vertex. An edge that incidents with a pendant vertex is called a \textit{pendant edge}. If every two distinct vertices of a graph $G$ is adjacent, then $G$ is a \textit{complete} graph. Let $k\geq 1$ be an integer. A \textit{path} between two vertices $x_0$ and $x_k$ is a graph with a set of vertices $\{x_0, x_1, \dots, x_k\}$ and a set of edges $\{x_0x_1, x_1x_2, \dots, x_{k-1}x_k\}$, where $x_i\ne x_j$ for every distinct $i,j \in \{0,1,\dots,k\}$. If there is a path between every two distinct vertices of a graph, then the graph is \textit{connected}.  All graphs in this paper are nontrivial and connected. 

The \textit{length} of a path is the number of edges in the path. The \textit{distance} between two vertices $x$ and $y$, denoted by $d(x,y)$, in a graph $G$ is the length of the shortest path between $x$ and $y$ in $G$. The largest distance between any two vertices of a graph $G$ is called the \textit{diameter} of $G$, denoted by $diam(G)$. Let $r\geq2$ be an integer. A graph $G=(V(G),E(G))$ is called an \textit{$r$-partite} graph if $V(G)$ can be partitioned into $r$ classes such that every member of $E(G)$ has it ends in different classes.

\subsection{Some properties of the rainbow connection number of a graph}
Let $G$ be a connected graph with at least two vertices. Chartrand et al. \cite{Char} have determined some properties of the rainbow connection number of $G$. For a connected graph $G$, $diam(G) \leq rc(G) \leq m$, where $diam(G)$ is the diameter of $G$ and $m$ is the number of edges of $G$. For a natural number $t$, if a connected graph $G$ with $|G|\geq3$ has $t$ pendant vertices, then $rc(G)\geq t$. The rainbow connection number of a complete graph and a $k$-partite graph, where $k\geq3$, have also been determined. 

\begin{theorem}\label{Thm:1}\cite{Char}
	Let $G$ be a connected graph. The rainbow connection number of $G$ is 1 if and only if $G$ is a complete graph.
\end{theorem}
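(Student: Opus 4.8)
The plan is to prove the two implications separately, using the elementary bound $diam(G)\le rc(G)$ recalled just above, together with the observation that under a colouring with a single colour any rainbow path can contain at most one edge (two edges would necessarily repeat that colour).

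For the ``if'' direction, I would suppose $G$ is complete. Then for every pair of distinct vertices $u,v$ the single edge $uv$ is a path of length $1$ joining them, so colouring all edges of $G$ with one colour already makes $G$ rainbow-connected; hence $rc(G)\le 1$. Since $G$ is nontrivial and connected it has at least one edge, so $rc(G)\ge 1$, and therefore $rc(G)=1$. For the ``only if'' direction, I would suppose $rc(G)=1$ and fix a rainbow colouring using the single colour. If some pair of distinct vertices $u,v$ were non-adjacent, every $u$--$v$ path would have length at least $2$ and thus contain two edges of the same colour, contradicting that such a path is rainbow. Hence every pair of distinct vertices is adjacent and $G$ is complete. (Alternatively, $rc(G)=1$ forces $diam(G)\le 1$ by the bound above, so $diam(G)=1$, which for a nontrivial connected graph means $G$ is complete.)

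There is no genuine obstacle in this argument; the statement is essentially a direct consequence of the definitions. The only point requiring a little care is to exclude the trivial graph, which is handled by the standing assumption that all graphs under consideration are nontrivial and connected, guaranteeing $|V(G)|\ge 2$ and hence that $G$ has an edge.
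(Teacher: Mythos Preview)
Your proof is correct and entirely standard. Note, however, that the paper does not supply its own proof of this theorem: it is quoted from \cite{Char} without argument, so there is nothing in the paper to compare your approach against. Your direct argument from the definitions (together with the alternative via $diam(G)\le rc(G)$) is exactly the intended elementary justification.
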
  

\begin{theorem}\label{Thm:2} \cite{Char}
	Let $G=K_{n_1,n_2,\dots,n_k}$ be a complete $k$-partite graph, with $k\geq 3$ and $n_1\leq n_2 \leq \dots\leq n_k$, such that $s=\sum_{1}^{k-1} n_i$ and $t=n_k$. Then
	
	$rc(G)=
	\begin{cases}
		1 & \text{if } n_k = 1\\
		2 & \text{if } n_k\geq2, s>t \\
		min\{3,\lceil\sqrt[s]{t}\rceil\}& \text{if } s\leq t
	\end{cases}$
\end{theorem}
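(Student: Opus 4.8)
The plan is first to rewrite the three-case formula in a more convenient form and then split the argument into lower and upper bounds. Write $G=K_{n_1,\dots,n_k}$ with parts $V_1,\dots,V_k$, $|V_i|=n_i$, and set $W=V_1\cup\dots\cup V_{k-1}$, so that $|W|=s$, $|V_k|=t$, and $s\ge 2$ since $k\ge 3$. A short bookkeeping step shows that the stated quantity equals $1$ when $t=1$, equals $2$ when $2\le t\le 2^s$, and equals $3$ when $t>2^s$: indeed $\lceil\sqrt[s]{t}\rceil\le 2\iff t\le 2^s$, $\lceil\sqrt[s]{t}\rceil\ge 3\iff t>2^s$, and $n_k\ge 2,\ s>t$ forces $2\le t<s\le 2^s$. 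So it suffices to prove $rc(G)=1,2,3$ in these three regimes.

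Lower bounds are immediate except for the jump to $3$. If $t=1$ then $G=K_k$ is complete and $rc(G)=1$ by Theorem~\ref{Thm:1}; if $t\ge 2$ then $G$ is not complete and $rc(G)\ge 2$ by the same theorem. To get $rc(G)\ge 3$ when $t>2^s$, I would argue by contradiction: in a rainbow $2$-coloring every rainbow path has at most two edges, so each pairwise non-adjacent vertex of $V_k$ is joined to every other one by a length-$2$ rainbow path through some vertex of $W$; hence the assignment $x\mapsto(c(xw))_{w\in W}$ is an injection of $V_k$ into $\{1,2\}^{s}$, giving $t\le 2^s$, a contradiction.

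For the upper bounds I would exhibit explicit colorings. When $t\le 2^s$, color every edge inside $W$ with color~$1$ and color the edges between $W$ and $V_k$ by a matrix $M\in\{1,2\}^{W\times V_k}$ chosen so that (a) the $t$ columns of $M$ are pairwise distinct and (b) within each part $V_i\subseteq W$ the corresponding rows of $M$ are pairwise distinct; this is possible since $t\le 2^s$ and each $n_i\le t\le 2^t$, and one can even write $M$ down explicitly (put the unique $2$ of column $j$ in a designated row of each part $V_i$ with $n_i\ge j$, for $j\le\max_{i<k}n_i$, and fill the remaining columns with fresh distinct vectors). Then (a) gives a length-$2$ rainbow path through $W$ for every pair in $V_k$, (b) gives one through $V_k$ for every pair lying in a part of $W$, and all remaining pairs are adjacent, so $rc(G)\le 2$. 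When $t>2^s$, fix $p\in V_1$, $q\in V_2$; set $c(pq)=3$, $c(px)=1$ and $c(qx)=2$ for all $x\in V_k$; color the edges from each $V_i\subseteq W$ to $V_k$ not incident to $p,q$ so that any two vertices of $V_i$ are separated by some common neighbour in $V_k$; set $c(qp')=2$ for $p'\in V_1\setminus\{p\}$ and $c(pq')=1$ for $q'\in V_2\setminus\{q\}$; and color all other edges~$1$. Now every pair in $V_k$ has the rainbow path $x\,p\,q\,x'$ with colors $1,3,2$; the pairs $\{p,p'\}$ and $\{q,q'\}$ use $q$ and $p$ respectively; and every other within-part pair uses a common neighbour in $V_k$. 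Hence $rc(G)\le 3$.

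Combining these bounds against the reformulated case list yields the theorem. The step I expect to be the main obstacle is verifying that a single $M$ can meet (a) and (b) simultaneously in the $t\le 2^s$ case — and, analogously, that the $3$-coloring has enough slack to separate every within-part pair — because each edge between $W$ and $V_k$ is constrained at once by a part of $W$ and by $V_k$; once suitable colorings are in hand, every remaining verification is a direct check of short rainbow paths.
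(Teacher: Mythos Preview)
The paper does not prove Theorem~\ref{Thm:2}: it is quoted verbatim from Chartrand, Johns, McKeon and Zhang \cite{Char} as background in the preliminaries, with no argument supplied. So there is no ``paper's own proof'' to compare your proposal against.

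That said, your sketch is essentially a correct reconstruction of the standard argument. The reformulation into the three regimes $t=1$, $2\le t\le 2^s$, $t>2^s$ is sound, the pigeonhole lower bound via the injection $x\mapsto (c(xw))_{w\in W}$ is exactly the right idea for forcing $rc\ge 3$, and both explicit colorings do the job once the details are filled in. Two places deserve a little more care when you write it up: in the $2$-coloring, your description of the matrix $M$ (``the unique $2$ of column $j$ in a designated row of each part $V_i$'') is self-contradictory as written, since several parts may receive a $2$; say instead that column $j$ has a $2$ in the $j$th position of every block $V_i$ with $n_i\ge j$ and $1$'s elsewhere, and then check that distinct columns remain available for $j>\max_{i<k}n_i$ because $t\le 2^s$. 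In the $3$-coloring, your phrase ``any two vertices of $V_i$ are separated'' should really read ``any two vertices of $V_i\setminus\{p,q\}$'' for $i=1,2$, since you handle pairs involving $p$ or $q$ via the edge $pq$ of color $3$; as stated it would force the all-$1$'s row of $p$ to differ from every other row in $V_1$, which is an unnecessary extra constraint.
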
  
\subsection{Group and the commuting graph of a group}
We refer to \cite{Dum} for definitions and notations in Group Theory that are not described in this paper. A \textit{group} is an ordered pair $(\Gamma,*)$, where $\Gamma$ is a set and $*$ is a binary operation, which satisfies the following axioms:
\begin{enumerate}
	\item $(a*b)*c = a*(b*c)$ for all $a,b,c \in \Gamma$,
	\item there exists an element $e\in \Gamma$, which is called the \textit{identity element} of $\Gamma$, such that $a*e = e*a = a$ for all $a\in \Gamma$,
	\item for each $a\in \Gamma$ there is an element $a^{-1}\in \Gamma$, which is called the \textit{inverse} of $a$, such that $a*a^{-1} = a^{-1}*a=e$.
\end{enumerate}

For simplicity, the group notation $(\Gamma,*)$ will be written as $\Gamma$. Let $\Gamma$ be a group. Two members $a,b \in \Gamma$ \textit{commute} if $a*b=b*a$. A group $\Gamma$ is called an \textit{abelian} group if every two members of $\Gamma$ commute. The \textit{center} of a group $\Gamma$ is $Z(\Gamma)=\{z\in \Gamma | z*a = a*z$ for every $a \in \Gamma\}$. If $Z(\Gamma)=\{e\}$, where $e$ is the identity element of $\Gamma$, then $Z(\Gamma)$ is called \textit{trivial}. A group $\Gamma$ is called a \textit{finite group} if $|\Gamma|$ (the cardinality of the set $\Gamma$) is finite. An element $a\in \Gamma$ is called an \textit{involution} if $a\ne e$ and $a*a=e$. If $\Gamma$ is a group and $\Lambda$ is a nonempty subset of $\Gamma$ such that for all $h,k \in \Lambda$, $h*k \in \Lambda$ and $h^{-1}\in \Lambda$, then $\Lambda$ is called a \textit{subgroup} of $\Gamma$. If $\Lambda$ is a subgroup of $\Gamma$ such that every two elements of $\Lambda$ commute, then $\Lambda$ is an \textit{abelian subgroup} of $\Gamma$. If $\Lambda$ is an \textit{abelian subgroup} of a group $\Gamma$ and there is no other abelian subgroup of $\Gamma$ that contains $\Lambda$, then $\Lambda$ is called a \textit{maximal abelian subgroup} of $\Gamma$. 

The identity element of a finite group is a member of every maximal abelian subgroup of the group. Hence, the cardinality of any maximal abelian subgroup of a finite nontrivial group is at least 2. A maximal abelian subgroup of cardinality 2 consists of the identity element of the group and an involution that do not commute with any other nonidentity element of the group. If $a$ and $b$ are any two elements that do not commute in a finite nonabelian group, then $a$ and $b$ are not in the same maximal abelian subgroup of the group and the subset $\{a,b,a*b\}$ is noncommuting. Therefore, a finite nonabelian group must have at least three maximal abelian subgroups. It is obvious that if $\{\Lambda_1, \Lambda_2, \dots, \Lambda_m\}$ is the collection of all maximal abelian subgroups of a finite nonabelian group $\Gamma$, where $m\geq 3$, then $\Gamma = \bigcup\limits_{i=1}^{m} \Lambda_i$. A finite group $\Gamma$ has only one maximal abelian subgroup if and only if $\Gamma$ is abelian. In this case, the maximal abelian subgroup is the group itself. 

The \textit{commuting graph} $CG(\Gamma, X)$, where $\Gamma$ is a finite group and $X$ is a nonempty subset of $\Gamma$, is a graph whose set of vertices is $X$ and two members $a,b\in X$ are adjacent if $a*b = b*a$ in $\Gamma$~\cite{Ali}. If $X=\Gamma$, then $CG(\Gamma,X)$ is written as $CG(\Gamma)$. Obviously, $CG(\Gamma)$ is connected, since the identity element of any group $\Gamma$ comumutes with all elements of the group. Figure 1 shows the commuting graph of group $SD_{24}$, the semidihedral group with 24 members. The presentation of the group is $SD_{24}=\langle a,b: a^{12}=b^2=1, ba=a^{5}b\rangle$.
\begin{figure}[ht]
	\centering
	\includegraphics[scale=0.84]{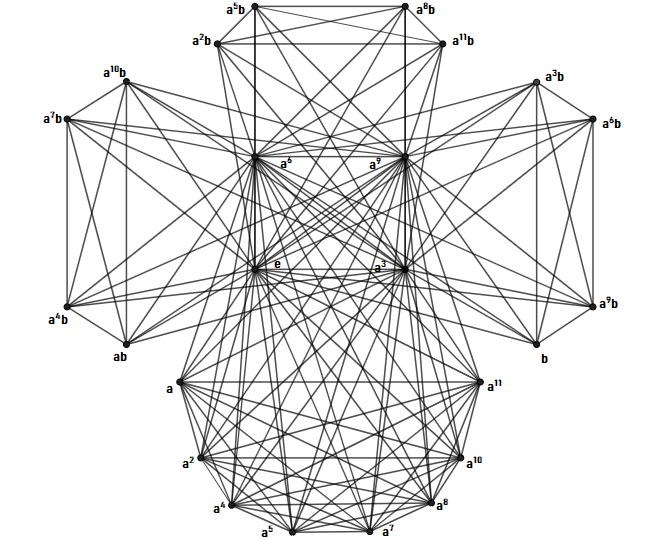}
	\caption{The commuting graph of group $SD_{24}$.\label{fig:sd8}}
\end{figure}

Every two elements of a maximal abelian subgroup of a finite group $\Gamma$ are adjacent in $CG(\Gamma)$. Hence, a maximal abelian subgroup of $\Gamma$ forms a maximal clique in $CG(\Gamma)$, and a maximal abelian subgroup of order 2 forms a pendant edge in $CG(\Gamma)$. Since a finite abelian group has only one maximal abelian subgroup, which is the group itself, the commuting graph the group is a complete graph, and hence its rainbow connection number is 1. If $\Gamma$ is a finite nonabelian group, then $\Gamma$ has more than one maximal commuting subsets and its commuting graph is not a complete graph..

\section{Main results}
Throughout this section, we discuss rainbow connection numbers of the commuting graphs of finite nonabelian groups. If $\Gamma$ is a finite nonabelian group, then $CG(\Gamma)$ is not a complete graph and hence $rc(CG(\Gamma))\geq 2$. We start the discussion with finite nonabelian groups whose commuting graphs have rainbow connection numbers at most 3.

\begin{theorem}\label{Thm:3}
	Let $\Gamma$ be a finite nonabelian group. The rainbow connection number of $CG(\Gamma)$ is at most $3$ if and only if $\Gamma$ has a nontrivial center or $\Gamma$ is a group with a trivial center and has at most three maximal abelian subgroups of order $2$.
\end{theorem}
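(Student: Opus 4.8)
The plan is to prove both directions by exhibiting or ruling out a rainbow coloring with three colors, organizing the argument around the structure of maximal abelian subgroups, since these form the maximal cliques of $CG(\Gamma)$ and the order-$2$ ones are exactly the pendant edges.

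First I would handle the case where $\Gamma$ has a nontrivial center. Pick a nonidentity element $z \in Z(\Gamma)$; then $z$ is adjacent in $CG(\Gamma)$ to every other vertex, so $CG(\Gamma)$ has diameter at most $2$, whence $rc(CG(\Gamma)) \ge 2$. To get the upper bound $rc(CG(\Gamma)) \le 3$ I would try the following coloring: since $\Gamma = \bigcup_{i=1}^m \Lambda_i$ where $\Lambda_1,\dots,\Lambda_m$ are the maximal abelian subgroups, I would color all edges inside the ``hub'' — that is, edges incident to $z$ — with one color, say color $1$, and then color the remaining edges (those lying inside some $\Lambda_i$ but not touching $z$) so that any two vertices $u,v$ in a common $\Lambda_i$ are joined either directly by an edge of a color $\ne 1$ or via the path $u z v$. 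A clean way: give every edge not incident to $z$ color $2$, and check that for nonadjacent $u,v$ the path $u z v$ uses colors $1,1$ — which fails. So instead I would use the path through $z$ only for nonadjacent pairs and color edges incident to $z$ with colors $2$ and $3$ alternately in a way that no $u z v$ path repeats a color; concretely, $2$-color the vertex set $\Gamma \setminus \{z\}$ so that each maximal abelian subgroup is not monochromatic except possibly order-$2$ ones, color edge $zu$ by the class of $u$ plus one, and color all non-hub edges with the third color. The verification that every nonadjacent pair is rainbow-connected is the routine part; the real content is arranging the two-coloring of $\Gamma \setminus \{z\}$, which is always possible because a nonabelian group has $m \ge 3$ maximal abelian subgroups.

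For the trivial-center case I would argue: if $\Gamma$ has trivial center, then an order-$2$ maximal abelian subgroup $\{e,a\}$ gives a pendant vertex $a$ in $CG(\Gamma)$ (the involution $a$ commutes with nothing but $e$), and distinct such subgroups give distinct pendant edges all sharing the vertex $e$. If there are at least four such subgroups, then by the Chartrand et al. pendant-vertex bound $rc(CG(\Gamma)) \ge 4$, giving one direction of the contrapositive. Conversely, if $\Gamma$ has trivial center and at most three order-$2$ maximal abelian subgroups, I would build a $3$-coloring: color the (at most three) pendant edges at $e$ with colors $1,2,3$ respectively, color all other edges incident to $e$ with color $1$, and color all edges not incident to $e$ with color $2$ — then any two vertices $u,v$ are connected by $u e v$ if at least one of them is non-pendant (choosing the non-pendant leg's color to differ from the pendant leg, which is arranged by the assignment), and two pendant vertices cannot both need connecting since there is no path between distinct pendant vertices other than through $e$, forcing their two pendant edges to have different colors — which is exactly what the assignment $1,2,3$ guarantees as long as there are at most three of them.

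The main obstacle is the upper-bound construction in the nontrivial-center case: one must produce a genuinely valid $3$-coloring and check that every nonadjacent pair $u,v$ (necessarily lying in no common $\Lambda_i$, so needing a two-edge detour through a central element $z$) receives two distinct colors on the path $uzv$, while simultaneously every adjacent pair that happens to sit only on a monochromatic clique still has an alternate rainbow route. Handling the degenerate situations — e.g.\ when $Z(\Gamma)$ itself is an order-$2$ group, or when some maximal abelian subgroup has order $2$ even though the center is nontrivial — will require splitting into subcases, and I expect the bulk of the write-up to be the careful bookkeeping that the proposed coloring survives all of them.
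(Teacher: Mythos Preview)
Your converse direction (four or more pendant edges force $rc\ge 4$) matches the paper exactly, and your nontrivial-center construction, once you actually name the $2$-coloring (take $e$ alone in one class and everything else in the other), collapses to the paper's argument: the rainbow path for nonadjacent $u,v$ becomes $u\,z\,e\,v$ with colors $2,1,3$, which is precisely the paper's path $x\,z_1\,z_2\,y$ with $z_1=z$, $z_2=e$. Your stated reason for why the $2$-coloring exists (``because $m\ge 3$'') is not the right one, but the coloring does exist, so this part is salvageable.

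The genuine gap is in the trivial-center upper bound. Your coloring --- pendant edges $1,2,3$, all other edges at $e$ colored $1$, all edges not at $e$ colored $2$ --- does \emph{not} rainbow-connect the graph. Take any two non-pendant, nonadjacent vertices $u,v$: the path $u\,e\,v$ has colors $1,1$; the path $u\,e\,w\,v$ (with $w$ a neighbour of $v$ other than $e$) has colors $1,1,2$; the path $u\,w\,v$ through a common neighbour $w\ne e$ has colors $2,2$. None is rainbow. The same failure occurs between the pendant vertex whose edge received color $1$ and any non-pendant vertex. Your parenthetical ``which is arranged by the assignment'' is simply false here: all non-pendant hub edges have the \emph{same} color $1$, so there is no freedom to make the two legs of $u\,e\,v$ differ.

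What the paper does instead is more delicate. It first orders the non-pendant, non-identity vertices as $\hat c_1,\dots,\hat c_s$ so that each $\hat c_i$ is adjacent to $\hat c_{i-1}$ or $\hat c_{i+1}$ (this uses that every such vertex lies in a maximal abelian subgroup of order $\ge 3$), then colors the hub edge $e\hat c_i$ with color $1$ or $2$ according to the parity of $i$, and gives all non-hub edges color $3$. Now two non-pendant vertices of opposite parity are joined by $\hat c_i\,e\,\hat c_j$ (colors $1,2$), while two of the same parity are joined by $\hat c_i\,e\,\hat c_{j\pm 1}\,\hat c_j$ (colors $1,2,3$ or $2,1,3$), using the adjacency guaranteed by the ordering. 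The alternation on the hub edges, together with the existence of this ordering, is the missing idea in your proposal.
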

\begin{proof}[Proof]
	Let $\Gamma$ be a finite nonabelian group with a nontrivial center and $Z(\Gamma)$ be the center of $\Gamma$. Since any member of $Z(\Gamma)$ commutes with all members of $\Gamma$, any member of $Z(\Gamma)$ is adjacent with all vertices in $V(CG(\Gamma))$. Choose any two members $z_1$ and $z_2$ from $Z(\Gamma)$. For every element $x\in \Gamma\setminus Z(\Gamma$), the edge $xz_1$ is colored with color 1 and the edge $xz_2$ is colored with color 2. The edge $z_1z_2$ is colored with color 3. The other edges of $CG(\Gamma)$ are colored with color 1. With this edge coloring, every two distinct elements $x,y \in \Gamma\setminus Z(\Gamma)$ are connected by the path $xz_1z_2y$ which has 3 edges with 3 distinct colors. Hence, $rc(CG(\Gamma))\leq3$.  
	
	Now let $\Gamma$ be a finite nonabelian group with a trivial center and has at most three maximal abelian subgroups of order 2. Let $e$ be the identity element of $\Gamma$, $\mathscr{C}=\{C_1,\dots, C_m\}$ be the collection of all maximal abelian subgroups of $\Gamma$ with $m\geq3$ (since $\Gamma$ is nonabelian), and let $\mathscr{C}$ has exactly $t$ members of order 2, where $0\leq t \leq 3$. Construct a set $\bar{\mathscr{C}} = \{\bar{C}_1, \dots, \bar{C}_t\}$, which is the collection of all maximal abelian subgroups of order 2 and $\hat{C} = (\Gamma\setminus\bigcup\limits_{i=1}^{t}\bar{C}_i)\cup \{e\}$. Write $\bar{C}_i=\{e,\bar{c}_i\}$ for every $i\in \{1, \dots, t\}$ and write $\hat{C} = \{e, \hat{c}_1, \hat{c}_2, \dots, \hat{c}_s\}$ in an order such that $\hat{c}_i$ commutes with $\hat{c}_{i-1}$ or with $\hat{c}_{i+1}$ for every $i\in \{1, \dots, s\}$, where $s=|\Gamma|-t-1$. 
	
	If $t=0$, then $\bar{\mathscr{C}}=\emptyset$, $\hat{C} = \Gamma$, and we can color the edges of $CG(\Gamma)$ with three colors as follows:
	\begin{enumerate}
		\item the edge $e\hat{c}_i$ is colored with color 1 if $i$ is odd or color 2 if $i$ is even for every $i \in \{1,2,\dots,s\}$,
		\item the other edges are colored with color 3.
	\end{enumerate}
	With this coloring, the rainbow paths between two vertices of $CG(\Gamma)$ are as follows:
	\begin{enumerate}
		\item $\hat{c}_i\hat{c}_j$ if $\hat{c}_i$ and $\hat{c}_j$ are adjacent, where $i,j\in \{1, 2, \dots, s\}$,
		\item $\hat{c}_ie\hat{c}_j$ if $\hat{c}_i$ and $\hat{c}_j$ are not adjacent and the parities of $i$ and $j$ are different, where $i,j\in \{1, 2, \dots, s\}$,
		\item if $\hat{c}_i$ and $\hat{c}_j$ are not adjacent and the parities of $i$ and $j$ are the same, where $i,j\in \{1, 2, \dots, s\}$, then the rainbow path between $\hat{c}_i$ and $\hat{c}_j$ is 	$\hat{c}_ie\hat{c}_{j-1}\hat{c}_j$ if $\hat{c}_{j-1}$ is adjacent to $\hat{c}_j$ and $j\ne1$, or $\hat{c}_ie\hat{c}_{j+1}\hat{c}_j$ if $\hat{c}_{j+1}$ is adjacent to $\hat{c}_j$ and $j\ne s$.	 
	\end{enumerate}
	Since every two vertices of $CG(\Gamma)$ are connected by a rainbow path under this edge coloring, we have $rc(CG(\Gamma))\leq 3$.
	
	If $1\leq t \leq 3$, the edge $e\bar{c}_i$ is a pendant edge of $CG(\Gamma)$ for every $i\in \{1, \dots, t\}$ and we can color the edges of $CG(\Gamma)$ with three colors as follows:
	
	\begin{enumerate}
		\item the edge $e\bar{c}_i$ is colored with color $i$ for every $i \in \{1,\dots,t\}$,
		\item the edge $e\hat{c}_i$ is colored with color 1 if $i$ is odd or color 2 if $i$ is even for every $i \in \{1,2,\dots,s\}$,
		\item the other edges are colored with color 3.
	\end{enumerate}
	The rainbow paths between two vertices of $CG(\Gamma)$ under this edge coloring are as follows.
	\begin{enumerate}
		\item The rainbow paths between $\hat{c}_i$ and $\hat{c}_j$, where $i, j\in\{1, 2, \dots, s\}$ and $i\ne j$, are the same as the paths in the case of $t=0$.	
		\item For $i,j\in \{1, \dots, t\}$, where $i\ne j$, the rainbow path between $\bar{c}_i$ and $\bar{c}_j$ is $\bar{c}_ie\bar{c}_j$. 
		\item For $j \in \{1,2,\dots,s\}$, the rainbow path between $\bar{c}_1$ and $\hat{c}_j$ is $\bar{c}_1e\hat{c}_j$ if $j$ is even, $\bar{c}_1e\hat{c}_{j-1}\hat{c}_j$ if $j$ is odd, $\hat{c}_{j-1}$ is adjacent to $\hat{c}_j$, and $j\ne1$, or $\bar{c}_1e\hat{c}_{j+1}\hat{c}_j$ if $j$ is odd, $\hat{c}_{j+1}$ and $\hat{c}_j$ are adjacent, and $j\ne s$.
		
		\item If $t=2$ and $j \in \{1,2,\dots,s\}$, the rainbow path between $\bar{c}_2$ and $\hat{c}_j$ is $\bar{c}_2e\hat{c}_j$ if $j$ is odd, $\bar{c}_2e\hat{c}_{j-1}\hat{c}_j$ if $j$ is even and $\hat{c}_{j-1}$ is adjacent to $\hat{c}_j$, or $\bar{c}_2e\hat{c}_{j+1}\hat{c}_j$ if $j$ is even, $\hat{c}_{j+1}$ is adjacent to $\hat{c}_j$, and $j\ne s$. The rainbow paths between $\bar{c}_1$ and $\hat{c}_j$ are the same as the paths in point 3.	 
		
		\item If $t=3$ and $j \in \{1,2,\dots,s\}$, the rainbow path between $\bar{c}_3$ and $\hat{c}_j$ is $\bar{c}_3e\hat{c}_j$ . The rainbow paths between $\bar{c}_1$ and $\hat{c}_j$, and between $\bar{c}_2$ and $\hat{c}_j$, are the same as the paths in point 3 and 4, respectively.		
	\end{enumerate}
	Since every two distinct vertices of $CG(\Gamma)$ are connected by a rainbow path under this edge coloring, we get $rc(CG(\Gamma))\leq 3$. From the above explanations, we conclude that if $\Gamma$ has a nontrivial center or if $\Gamma$ is a group with a trivial center and has at most three maximal abelian subgroups of order 2, then $rc(CG(\Gamma))\leq 3$. 
	
	Now suppose that $\Gamma$ is a finite nonabelian group with a trivial center and has at least $4$ maximal abelian subgroups of order 2. Therefore, $CG(\Gamma)$ has at least $4$ pendant edges and $rc(CG(\Gamma))\geq 4$. Thus, if $rc(CG(\Gamma))\leq 3$, then $\Gamma$ has a nontrivial center or $\Gamma$ is a group with a trivial center and has at most three maximal abelian subgroups of order 2.  
\end{proof}

Theorem~\ref{Thm:3} characterizes all finite nonabelian groups whose rainbow connection numbers of their commuting graphs do not exceed 3. The theorem states that the rainbow connection number of the commuting graph of a finite nonabelian groups with nontrivial center cannot be greater than 3. In Theorem 4, we discuss a finite nonabelian group with nontrivial center whose rainbow connection number of its inverse graph equals 2. 
\begin{theorem}\label{Thm:4}
	Let $\Gamma$ be a finite nonabelian group with $|Z(\Gamma)|\geq2$ and $\mathscr{C}$ be the collection of all maximal abelian subgroups of $\Gamma$. If $|\mathscr{C}|\leq 2^{|Z(\Gamma)|}$, then $rc(CG(\Gamma))= 2$.
\end{theorem}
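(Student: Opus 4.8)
The plan is to establish the two bounds $rc(CG(\Gamma))\ge 2$ and $rc(CG(\Gamma))\le 2$ separately. The lower bound is immediate: since $\Gamma$ is nonabelian, $CG(\Gamma)$ is not complete, so Theorem~\ref{Thm:1} gives $rc(CG(\Gamma))\ge 2$. The substance is the upper bound, for which I would exhibit an explicit edge coloring with two colors under which every pair of vertices is joined by a rainbow path. Note first that $diam(CG(\Gamma))=2$: any two elements of $\Gamma$ commute with any central element (in particular with $e$), so any two non-adjacent vertices already have a common neighbour. Hence it suffices to arrange that every non-adjacent pair of vertices — equivalently, every pair of non-commuting elements — has a common neighbour joined to the two of them by edges of different colors.

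Before coloring I would record two structural facts. (i) Every element of $\Gamma$ lies in at least one maximal abelian subgroup (take an abelian subgroup of largest order containing it; by finiteness this is inclusion-maximal), and any two elements lying in a common maximal abelian subgroup commute; consequently two non-commuting elements lie in \emph{no} common maximal abelian subgroup. (ii) Every element of $Z(\Gamma)$ is adjacent in $CG(\Gamma)$ to every other vertex. Now for the coloring: write $\mathscr{C}=\{C_1,\dots,C_m\}$ with $m=|\mathscr{C}|\le 2^{|Z(\Gamma)|}$. Since $Z(\Gamma)$ has exactly $2^{|Z(\Gamma)|}$ subsets, I can choose pairwise distinct subsets $A_1,\dots,A_m\subseteq Z(\Gamma)$, one attached to each $C_k$. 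For each non-central vertex $x$, fix once and for all a maximal abelian subgroup $C(x)\in\mathscr{C}$ with $x\in C(x)$. Then color an edge joining a non-central vertex $x$ to a central vertex $z$ with color $1$ if $z\in A_{C(x)}$ and with color $2$ otherwise, and color every remaining edge with color $1$.

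To verify that this coloring is rainbow-connecting: an adjacent pair of vertices is joined by the single edge between them, a rainbow path of length $1$. A non-adjacent pair $x,y$ consists of two non-commuting elements, both necessarily non-central by fact (ii), and $C(x)\ne C(y)$ by fact (i), so $A_{C(x)}\ne A_{C(y)}$. Picking $z$ in the symmetric difference $A_{C(x)}\,\triangle\,A_{C(y)}$, the element $z$ is central, hence (fact (ii)) adjacent to both $x$ and $y$ and distinct from both, while the edges $xz$ and $yz$ receive different colors by construction; thus $x\,z\,y$ is a rainbow path of length $2$. This gives $rc(CG(\Gamma))\le 2$, and with the lower bound, $rc(CG(\Gamma))=2$.

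I do not expect a serious obstacle here; the argument is a short construction once the pieces are arranged. The only place that genuinely uses the hypothesis is the injective assignment of subsets of $Z(\Gamma)$ to the maximal abelian subgroups, which is possible precisely because $|\mathscr{C}|\le 2^{|Z(\Gamma)|}$, and the only place that genuinely uses the structure of commuting graphs is the observation that non-commuting elements cannot share a maximal abelian subgroup, which is what forces $A_{C(x)}\ne A_{C(y)}$ and hence produces the distinguishing central vertex $z$. The minor care needed is in checking the routine points above: that every non-central element lies in some maximal abelian subgroup, and that the chosen middle vertex $z$ is automatically different from $x$ and $y$.
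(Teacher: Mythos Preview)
Your proof is correct and follows essentially the same approach as the paper: both assign to each maximal abelian subgroup a distinct element of $\{1,2\}^{Z(\Gamma)}$ (you phrase this as a subset $A_k\subseteq Z(\Gamma)$, the paper as a $|Z(\Gamma)|$-tuple of colors) and use this to color the edges between non-central and central vertices so that non-commuting elements find a distinguishing central neighbour. Your explicit choice function $C(x)$ handles the case of elements lying in several maximal abelian subgroups more cleanly than the paper's treatment, but the argument is the same.
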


\begin{proof}[Proof] 
	Let \(\Gamma\) be a non-Abelian finite group with \(|Z(\Gamma)| \geq 2\) and having \(n\) maximal Abelian subgroups where \(n \geq 3\). Let \(C = \{C_1, \dots, C_n\}\) be the collection of all maximal Abelian subgroups of \(\Gamma\). Since \(\Gamma\) is non-Abelian, \(CG(\Gamma)\) is not a complete graph. Therefore, \(rc(CG(\Gamma)) \geq 2\).

Select any element \(u_i\) from \(C_i \setminus Z(\Gamma)\) for each \(i \in \{1, 2, \dots, n\}\) and form a set \(U = \{u_1, u_2, \dots, u_n\}\). If there exist two elements \(u_i\) and \(u_j\) such that \(u_i = u_j\) for distinct \(i\) and \(j\) in \(\{1, 2, \dots, n\}\) (i.e., \(u_i\) and \(u_j\) are the same non-central element in an intersection of two or more maximal Abelian subgroups), then remove one of the two elements from \(U\). Hence, \(|U| \leq |C|\).

Let \(J\) be an induced subgraph of \(CG(\Gamma)\) with the vertex set \(V(J) = Z(\Gamma) \cup U\). Note that every member of \(Z(\Gamma)\) is adjacent to every other element in \(Z(\Gamma) \cup U\) in \(J\). Suppose \(J\) is edge-colored with two colors. Each \(u_i \in U\) is labeled with an ordered \(|Z(\Gamma)|\)-tuple, denoted by \(K_{u_i} = (k_{u_i1}, k_{u_i2}, \dots, k_{u_i|Z(\Gamma)|})\), where \(k_{u_ij}\) is the color of the edge \(u_iz_j\), with \(z_j\) being a member of \(Z(\Gamma)\), and \(j \in \{1, \dots, |Z(\Gamma)|\}\). Since only two colors are used to color the edges of \(J\), we have \(k_{u_ij} \in \{1, 2\}\) for each \(u_i \in U\) and each \(j \in \{1, \dots, |Z(\Gamma)|\}\). Therefore, the number of distinct ordered \(|Z(\Gamma)|\)-tuples that can be used to label the elements in \(U\) is \(2^{|Z(\Gamma)|}\).

If \(|C| \leq 2^{|Z(\Gamma)|}\), then every two distinct elements in \(U\) that come from different maximal Abelian subgroups can be labeled with different \(|Z(\Gamma)|\)-tuples. If \(u_i\) and \(u_j\) are two distinct elements in \((C_i \cap C_j) \setminus Z(\Gamma)\), the shortest rainbow path in \(J\) connecting these two elements is the edge \(u_iu_j\). For \(u_i \in C_i \setminus C_j\) and \(u_j \in C_j \setminus C_i\), the rainbow path of length 2 in the subgraph \(J\) that connects \(u_i\) and \(u_j\) is \(u_iz_lu_j\), where \(z_l\) is an element of \(Z(\Gamma)\) such that \(k_{u_il} \neq k_{u_jl}\). Since each member of \(U\) is chosen arbitrarily from \(C_i \setminus Z(\Gamma)\) for each \(i \in \{1, 2, \dots, n\}\), it follows that any two elements from different maximal Abelian subgroups are connected by a rainbow path. Note that any two noncommuting elements in \(\Gamma\) do not belong to the same maximal Abelian subgroup. Therefore, if \(|C| \leq 2^{|Z(\Gamma)|}\), the edges of the graph \(CG(\Gamma)\) can be colored with two colors such that any two non-adjacent vertices are connected by a rainbow path of length 2. Consequently, \(rc(CG(\Gamma)) \leq 2\). Since \(rc(CG(\Gamma)) \geq 2\), it follows that \(rc(CG(\Gamma)) = 2\).
\end{proof}

The semidihedral group $SD_{8n}$, which is a group of order $8n$ with $n\geq2$, is an example of a group that meet the condition of Theorem~\ref{Thm:4}. The presentation of the group is $SD_{8n}=\langle a,b: a^{4n}=b^2=e, ba=a^{2n-1}b\rangle$. The set of elements of the group is $\{e,a,a^2,\dots,a^{4n-1}, b,ab,a^2b,\dots,a^{4n-1}b\}$. The center of $SD_{8n}$ is $Z(SD_{8n})=\{e,a^{2n}\}$ if $n$ is even, and $Z(SD_{8n})=\{e,a^n,a^{2n},a^{3n}\}$ if $n$ is odd. For an even $n$, the maximal abelian subgroups of $SD_{8n}$ are $H_i=\{e,a^{2n},a^ib,a^{2n+i}b\}$ for $i\in\{1,2,\dots,3n-1\}$ and $H_{3n}=\{e,a,a^2,\dots,a^{4n-1}\}$. For an odd $n$, the maximal abelian subgroups of $SD_{8n}$ are $H_i=\{e,a^n,a^{2n},a^{3n},a^ib,a^{n+i}b,a^{2n+i}b,a^{3n+i}b\}$ for $i\in\{1,2,\dots,n\}$ and $H_{n+1}=\{e,a,a^2,\dots,a^{4n-1}\}$. If $n=$ 3, $SD_{8n}= SD_{24}$ and has four maximal abelian subgroups. Hence, $rc(CG(SD_{24}))=2$. Figure 2 shows the minimum rainbow coloring of the commuting graph of the group $SD_{24}$ with two colors: color 1 and color 2.

\begin{figure}[ht]
	\centering
	\includegraphics[scale=0.39]{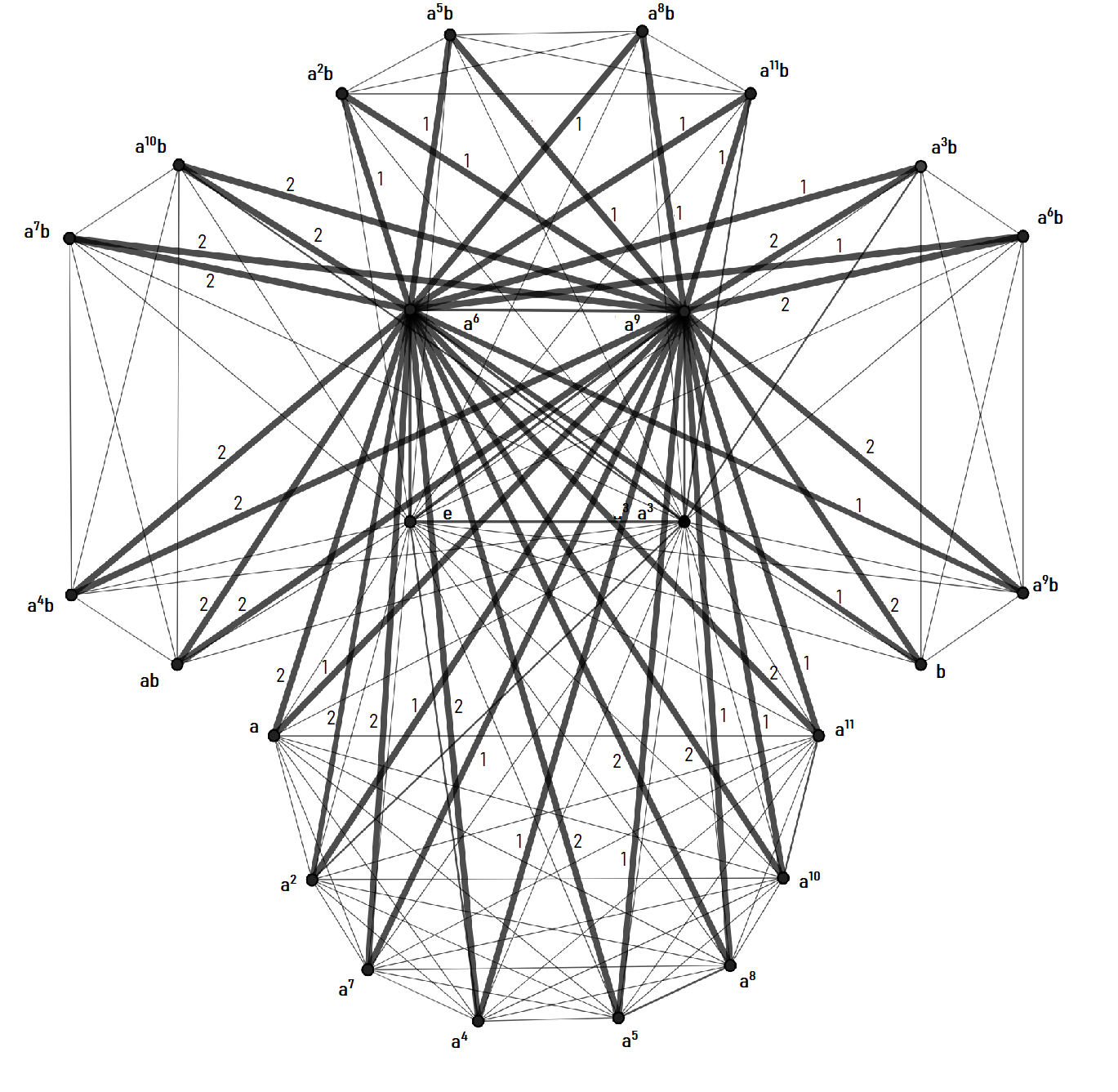}
	\caption{The minimum rainbow coloring of the commuting graph of group $SD_{24}$ with two colors. The edges whose color numbers are not written are colored with color 1.\label{fig:pewarnaansd8n}}
\end{figure}

From Theorem ~\ref{Thm:4}, we directly get the fact that for a finite nonabelian group $\Gamma$ with $|Z(\Gamma)|\geq2$, if $rc(CG(\Gamma))= 3$, then $\Gamma$ has more than $2^{|Z(\Gamma)|}$ maximal abelian subgroups. In order to obtain some finite nonabelian groups whose rainbow connection numbers of their commuting graphs equal 3, we need the following theorem.

\begin{theorem}\label{Thm:5}
	Let \(\Gamma\) be a finite non-Abelian group and \(H \subset \Gamma\) with \(|H| \geq 2\). If \(\Gamma\) has a collection \(\mathscr{T}\) of at least two maximal Abelian subgroups such that the intersection of any two subgroups in \(\mathscr{T}\) is equal to \(H\), and \(X\) is the union of all members of \(\mathscr{T}\), then

\begin{enumerate}
    \item \(rc(CG(\Gamma, X)) = 2\) if and only if \(|\mathscr{T}| \leq 2^{|H|}\);
    \item \(rc(CG(\Gamma, X)) = 3\) if and only if \(|\mathscr{T}| > 2^{|H|}\).
\end{enumerate}

\end{theorem}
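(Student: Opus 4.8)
The plan is to establish the a priori bounds $2\le rc(CG(\Gamma,X))\le 3$ first, then to show that $CG(\Gamma,X)$ admits a rainbow $2$-coloring \emph{exactly} when $|\mathscr{T}|\le 2^{|H|}$; since $rc(CG(\Gamma,X))$ is then forced into $\{2,3\}$, statements (1) and (2) follow formally. Write $\mathscr{T}=\{C_1,\dots,C_n\}$ with $n=|\mathscr{T}|\ge 2$. For $j\ne i$ we have $H=C_i\cap C_j\subseteq C_i$, so $C_i$ is abelian and $C_i\supsetneq H$ (otherwise $C_i=H=C_i\cap C_j\subseteq C_j$ would force the maximal abelian group $C_i$ to equal $C_j$), the sets $C_i\setminus H$ are pairwise disjoint, and each vertex of $CG(\Gamma,X)$ is adjacent to all of $H$ (it lies in some $C_k\supseteq H$, which is abelian). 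If $CG(\Gamma,X)$ were complete, then $\langle X\rangle$ would be abelian and, as $C_1\subseteq X$ is maximal abelian, $C_1=\langle X\rangle\supseteq C_2$, contradicting $C_1\ne C_2$; hence $rc(CG(\Gamma,X))\ge 2$ by Theorem~\ref{Thm:1}. For the upper bound, fix distinct $z_1,z_2\in H$, color $xz_1$ with $1$ and $xz_2$ with $2$ for every $x\in X\setminus\{z_1,z_2\}$, color $z_1z_2$ with $3$, and color every remaining edge with $1$; any two non-adjacent vertices $x,y$ are then joined by the rainbow path $xz_1z_2y$, so $rc(CG(\Gamma,X))\le 3$.

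The heart of the argument is the structural claim that \textbf{no element of $C_i\setminus H$ commutes with any element of $C_j\setminus H$ when $i\ne j$}; equivalently, $CG(\Gamma,X)$ is the join of the clique on $H$ with the disjoint union of the cliques on $C_1\setminus H,\dots,C_n\setminus H$. I would prove it by contradiction: if $u\in C_i\setminus H$ and $v\in C_j\setminus H$ commuted, then $v$ commutes with $u$ and with all of $H$ (since $H\subseteq C_j$ is abelian), hence with the whole abelian subgroup $\langle H,u\rangle\subseteq C_i$; the goal is to upgrade this to ``$v$ centralizes $C_i$'', for then, since a maximal abelian subgroup is self-centralizing ($C_i=C_\Gamma(C_i)$), we would get $v\in C_i\cap C_j=H$, contradicting $v\notin H$. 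I expect this upgrade --- equivalently, showing that $\langle H,u,v\rangle$ cannot lie in a maximal abelian subgroup other than $C_i$ --- to be the main obstacle, and it is precisely here that the hypothesis ``any two members of $\mathscr{T}$ meet in exactly $H$'' has to be exploited. Granting the structural claim, for $i\ne j$ the common neighbourhood in $CG(\Gamma,X)$ of $u\in C_i\setminus H$ and $v\in C_j\setminus H$ is exactly $H$, and $u\not\sim v$.

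For the upper bound when $|\mathscr{T}|\le 2^{|H|}$: enumerate $H=\{h_1,\dots,h_{|H|}\}$, choose pairwise distinct tuples $\tau_1,\dots,\tau_n\in\{1,2\}^{|H|}$, color $uh_\ell$ by the $\ell$-th entry of $\tau_i$ whenever $u\in C_i\setminus H$, and color every other edge with $1$. Vertices that lie in $H$, lie in a common $C_i$, or consist of some $u\in C_i\setminus H$ and some $h_\ell\in H$ are adjacent; and for $u\in C_i\setminus H$, $v\in C_j\setminus H$ with $i\ne j$ (non-adjacent, by the structural claim), any coordinate $\ell$ with $\tau_i(\ell)\ne\tau_j(\ell)$ gives the rainbow path $uh_\ell v$. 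Hence $CG(\Gamma,X)$ is rainbow $2$-colorable, and $rc(CG(\Gamma,X))=2$.

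For the lower bound when $|\mathscr{T}|>2^{|H|}$: suppose $c$ were a rainbow $2$-coloring. Choose $u_i\in C_i\setminus H$ for each $i$; the $u_i$ are distinct, and attaching to $u_i$ its signature $\sigma(u_i)=(c(u_ih_1),\dots,c(u_ih_{|H|}))\in\{1,2\}^{|H|}$, the pigeonhole principle gives $i\ne j$ with $\sigma(u_i)=\sigma(u_j)$. By the structural claim, $u_i$ and $u_j$ are non-adjacent and their common neighbours lie in $H$; but a rainbow path in a $2$-colored graph has at most two edges, and a length-two path $u_ih_\ell u_j$ would require $c(u_ih_\ell)\ne c(h_\ell u_j)$, i.e.\ the $\ell$-th entries of $\sigma(u_i)$ and $\sigma(u_j)$ to differ, which is impossible. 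So no rainbow $2$-coloring exists, $rc(CG(\Gamma,X))\ge 3$, and with the bound above $rc(CG(\Gamma,X))=3$. Assembling these four facts and using $rc(CG(\Gamma,X))\in\{2,3\}$ yields both equivalences.
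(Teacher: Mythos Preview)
Your overall strategy coincides with the paper's: establish $2\le rc(CG(\Gamma,X))\le 3$, then decide between $2$ and $3$ by the tuple-labelling/pigeonhole dichotomy on the edges into $H$. The coloring constructions and the pigeonhole lower bound are essentially identical (your $3$-coloring via two fixed elements $z_1,z_2\in H$ is a bit simpler than the paper's explicit $3$-coloring on a representative subgraph $J$, and you work directly on $CG(\Gamma,X)$ rather than first on $J$ and then on its isomorphic copies, but these are cosmetic differences).

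The step you single out as unfinished---that no element of $C_i\setminus H$ commutes with any element of $C_j\setminus H$ for $i\ne j$---is precisely the step the paper does not justify either: it simply writes ``Since $\Theta_i\cap\Theta_j=H$ \dots each $a\in\Theta_i\setminus H$ is not adjacent to any $b\in\Theta_j\setminus H$'', which only gives disjointness of the two sets, not failure to commute. Your partial argument is sound as far as it goes: if such $u$ and $v$ commuted then $\langle H,u,v\rangle$ would be abelian and lie in some maximal abelian $M$; the obstacle you name is real, because nothing in the hypotheses forces $M\in\mathscr{T}$, so one cannot conclude $u\in C_i\cap M=H$. Thus, under the stated generality (an \emph{arbitrary} family $\mathscr{T}$ whose members pairwise meet in $H$) the structural claim is not a formal consequence of the hypotheses, and both your proposal and the paper's proof share this gap. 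The claim \emph{is} immediate in the situation actually used downstream (Corollary~\ref{Corr:1}), where $\mathscr{T}$ is the family of \emph{all} maximal abelian subgroups and $H=Z(\Gamma)$: then $M$ automatically belongs to $\mathscr{T}$, and the contradiction $u\in C_i\cap M=H$ goes through.
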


\begin{proof}[Proof] 
Suppose $\Gamma$ is a finite non-Abelian group and $H$ is a subset of $\Gamma$ with $|H|\geq 2$. Let $\Gamma$ have a collection of maximal Abelian subgroups $\mathscr{T} = \{\Theta_1, \dots, \Theta_m\}$ with $m \geq 2$, satisfying $H = \Theta_i \cap \Theta_j$ for every distinct $i$ and $j$ in $\{1, \dots, m\}$, and let $X = \bigcup\limits_{i=1}^{m} \Theta_i$. Since $\Gamma$ is a non-Abelian group and $m \geq 2$, there are two vertices in $CG(\Gamma, X)$ that are not adjacent, which implies that $CG(\Gamma, X)$ is not a complete graph. Consequently, $rc(CG(\Gamma, X)) \geq 2$. Any two distinct elements in $\Theta_i$ are adjacent in $CG(\Gamma, X)$ for each $i \in \{1, \dots, m\}$, so every two distinct elements in $H$ are adjacent in $CG(\Gamma, X)$. Since $\Theta_i \cap \Theta_j = H$ for every $i, j \in \{1, \dots, m\}$ with $i \neq j$, each $a \in \Theta_i \setminus H$ is not adjacent to any $b \in \Theta_j \setminus H$ in $CG(\Gamma, X)$ when $i \neq j$. Hence, the shortest paths in $CG(\Gamma, X)$ connecting $a \in \Theta_i \setminus H$ and $b \in \Theta_j \setminus H$ with $i \neq j$ are paths of the form $ah_kb$ where $h_k \in H$ and $k \in \{1, \dots, |H|\}$. These shortest paths have length 2.

Select one element $w_i$ from $\Theta_i \setminus H$ for each $i \in \{1, \dots, m\}$, then form the subset $W = \{w_1, \dots, w_m\}$. Clearly, any two distinct elements in $W$ are not adjacent in $CG(\Gamma, X)$. Consider the subgraph induced by $CG(\Gamma, X)$ with the vertex set $H \cup W$. Denote this subgraph as $J$. It is evident that $J$ is not a complete graph, so $rc(J) \geq 2$. Next, two cases will be considered to determine the rainbow connection number of the graph $J$.

\textbf{Case 1}: For $m \leq 2^{|H|}$.

Let the edges of $J$ be colored with a 2-coloring $p$. For every two elements $h_i$ and $h_j$ in $H$, the edge $h_ih_j$ is colored with color 1. For each $w_i \in W$, $i \in \{1, \dots, m\}$, assign a labeled ordered \textit{tuple}-$|H|$ as $K_{w_i} = (k_{w_{i1}}, k_{w_{i2}}, \dots, k_{w_{i|H|}})$, where $k_{w_{ij}}$ is the color of the edge $w_ih_j$, $h_j \in H$, and $j \in \{1, \dots, |H|\}$. Since the number of colors used is two, $k_{w_{ij}} \in \{1, 2\}$ for each $i \in \{1, \dots, m\}$ and each $j \in \{1, \dots, |H|\}$. Thus, the number of possible ordered \textit{tuples}-$|H|$ for the elements of $W$ is $2^{|H|}$. Note that $|W| = |\mathscr{T}| = m$. If $m \leq 2^{|H|}$, then every two distinct elements of $W$ can be assigned different labeled ordered \textit{tuples}-$|H|$, ensuring that every two elements of $W$ are connected by a rainbow path in $J$. The rainbow path connecting each pair $w_i$ and $w_j$ in $W$ has the form $w_ih_lw_j$, where $h_l$ is an element of $H$ such that $k_{w_{il}} \neq k_{w_{jl}}$. Therefore, we have $rc(J) \leq 2$. Since $rc(J) \geq 2$, it follows that $rc(J) = 2$.

\textbf{Case 2}: For $m > 2^{|H|}$.

As in Case 1, each $w_i \in W$, $i \in \{1, \dots, m\}$, is assigned a labeled ordered \textit{tuple}-$|H|$ as $K_{w_i} = (k_{w_{i1}}, k_{w_{i2}}, \dots, k_{w_{i|H|}})$, where $k_{w_{ij}}$ is the color of the edge $w_ih_j$, $h_j \in H$, and $j \in \{1, \dots, |H|\}$. 

Just as in Case 1, if we color the edges of the graph $J$ with two colors, the number of possible ordered \textit{tuples}-$|H|$ for the elements of $W$ is $2^{|H|}$. Since $m > 2^{|H|}$, there are at least two vertices $w_i$ and $w_j$ in $W$ such that $K_{w_i} = K_{w_j}$. Since $k_{w_{il}} = k_{w_{jl}}$ for each $l \in \{1, \dots, |H|\}$, there is no rainbow path of length 2 connecting $w_i$ and $w_j$ in $J$. Any other paths in $J$ that are not the shortest paths between $w_i$ and $w_j$ have length at least 3, so coloring the edges of $J$ with two colors does not result in a rainbow path in $J$ connecting $w_i$ and $w_j$. Therefore, we have $rc(J) \geq 3$.

Now suppose that each edge $\epsilon$ of $J$ is colored with three colors as follows:
\[
    p* =
    \begin{cases}
        1, & \text{if } \epsilon = h_ih_{i+1} \text{ for } i \in \{1, \dots, |H|-1\} \text{ or } \epsilon = h_iw_i \text{ for } i \in \{1, \dots, |H|\}, \\
        2, & \text{if } \epsilon = h_1w_t \text{ for } t \in \{|H|+1, \dots, m\}, \\
        3, & \text{for all other edges}.
    \end{cases}
\]

With this coloring, the rainbow paths between any two vertices in $J$ are as follows:
\begin{enumerate}
    \item The rainbow path between any two vertices in $H$ is the edge between those two vertices since every two vertices in $H$ are adjacent.
    \item The rainbow path between two vertices $w_i$ and $w_j$ with $i$ and $j$ in $\{1, \dots, |H|\}$ and $i \neq j$ is $w_i h_i w_j$.
    \item The rainbow path between two vertices $w_i$ and $w_j$ with $i$ and $j$ in $\{|H| + 1, \dots, m\}$ and $i \neq j$ is $w_i h_1 h_2 w_j$.
    \item The rainbow path between two vertices $w_i$ and $w_j$ with $i \in \{1, \dots, |H|\}$ and $j \in \{|H| + 1, \dots, m\}$ is $w_i h_1 w_j$.
\end{enumerate}
From the above description, it is clear that with the coloring $p^*$, any two distinct vertices in $J$ are connected by a rainbow path, so $rc(J) \leq 3$. Since $rc(J) \geq 3$, we conclude that $rc(J) = 3$.

If $|\Theta_i| \geq 4$ for at least one $i \in \{1, \dots, m\}$, then there exists another subgraph in $CG(\Gamma, X)$ that is isomorphic to $J$. In this case, to obtain another subgraph isomorphic to $J$, select one element $w'_i$ from $\Theta_i \setminus H$ for each $i \in \{1, \dots, m\}$, with $w'_i \neq w_i$ for at least one $i \in \{1, \dots, m\}$. Then form the subset $W' = \{w'_1, \dots, w'_m\}$. Clearly, any two distinct elements in $W'$ are not adjacent in $CG(\Gamma, X)$. The subgraph of $CG(\Gamma, X)$ induced by $H \cup W'$, call it subgraph $J'$, is isomorphic to $J$.

In $CG(\Gamma, X)$, there are $\prod_{i=1}^{m} |\Theta_i \setminus H|$ subgraphs that are isomorphic to $J$. Since these subgraphs are isomorphic to $J$, the rainbow connection number of each of these subgraphs is the same as $rc(J)$. Any two non-adjacent vertices in $CG(\Gamma, X)$ lie in the same subgraph, either $J$ or a subgraph isomorphic to $J$.

Next, the rainbow connection number $rc(J)$ will be used to determine $rc(CG(\Gamma, X))$. As with determining $rc(J)$, the determination of $rc(CG(\Gamma, X))$ will be divided into two cases.

\textbf{Case 1}: For $m \leq 2^{|H|}$.
Color the edges of $CG(\Gamma, X)$ using the following edge-coloring scheme:
\begin{enumerate}
    \item The subgraph $J$, and all subgraphs isomorphic to $J$, are colored using the minimum rainbow coloring for $J$, which uses two colors.
    \item All other edges in $CG(\Gamma, X)$ are colored with color 1.
\end{enumerate}
Since any two non-adjacent vertices in $CG(\Gamma, X)$ lie in the same subgraph (either the subgraph $J$ or a subgraph isomorphic to $J$), with the above edge-coloring, any two non-adjacent vertices in $CG(\Gamma, X)$ are connected by a rainbow path. Therefore, $rc(CG(\Gamma, X)) \leq 2$. Since $rc(CG(\Gamma, X)) \geq 2$, we conclude that $rc(CG(\Gamma, X)) = 2$.

\textbf{Case 2}: For $m > 2^{|H|}$.
Color the edges of $CG(\Gamma, X)$ using the following edge-coloring scheme:
\begin{enumerate}
    \item The subgraph $J$, and all subgraphs isomorphic to $J$, are colored using the minimum rainbow coloring for $J$, which uses three colors.
    \item All other edges in $CG(\Gamma, X)$ are colored with color 1.
\end{enumerate}
Since any two non-adjacent vertices in $CG(\Gamma, X)$ lie in the same subgraph (either the subgraph $J$ or a subgraph isomorphic to $J$), with the above edge-coloring, any two non-adjacent vertices in $CG(\Gamma, X)$ are connected by a rainbow path. Thus, $rc(CG(\Gamma, X)) \leq 3$. If we attempt to color the edges of $CG(\Gamma, X)$ with fewer than three colors, then there exist two non-adjacent vertices in the subgraph $J$ that are not connected by a rainbow path, implying that $rc(CG(\Gamma, X)) \geq 3$. Therefore, we conclude that $rc(CG(\Gamma, X)) = 3$.

From the above results, several conclusions can be drawn. If $m \leq 2^{|H|}$, then $rc(CG(\Gamma, X)) = 2$. If $m > 2^{|H|}$, then $rc(CG(\Gamma, X)) = 3$. Thus, we can conclude that $rc(CG(\Gamma, X)) = 2$ if and only if $m \leq 2^{|H|}$, and $rc(CG(\Gamma, X)) = 3$ if and only if $m > 2^{|H|}$.

\end{proof}

Using Theorem~\ref{Thm:5}, we obtain some finite nonabelian groups whose rainbow connection numbers of their commuting graphs equal 3.
\begin{theorem}\label{Thm:6}
	Let $\Gamma$ be a finite non-Abelian group with at most three maximal Abelian subgroups of order 2. Let $m$ be an integer with $m \geq 2$, $H \subset \Gamma$ with $|H| \geq 1$, and there exists a collection of maximal Abelian subgroups $\mathscr{T} = \{\Theta_1, \dots, \Theta_m\}$ of the group $\Gamma$ such that $\Theta_i \cap \Theta_j = H$ for $i \neq j$. If $m > 2^{|H|}$, then $rc(CG(\Gamma)) = 3$.
\end{theorem}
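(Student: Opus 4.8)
The plan is to obtain $rc(CG(\Gamma))\le 3$ for free from Theorem~\ref{Thm:3} and to prove $rc(CG(\Gamma))\ge 3$ by an edge-color counting argument that mirrors the analysis of the split graph $J$ in the proof of Theorem~\ref{Thm:5}. For the upper bound: $\Gamma$ is a finite non-Abelian group with at most three maximal Abelian subgroups of order $2$, so Theorem~\ref{Thm:3} applies via its ``nontrivial center'' clause when $Z(\Gamma)\neq\{e\}$ and via its ``trivial center with at most three order-$2$ maximal Abelian subgroups'' clause when $Z(\Gamma)=\{e\}$; either way $rc(CG(\Gamma))\le 3$, while $rc(CG(\Gamma))\ge 2$ by Theorem~\ref{Thm:1} since $CG(\Gamma)$ is not complete. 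So it suffices to rule out $rc(CG(\Gamma))=2$. First observe that $\Theta_1\setminus H,\dots,\Theta_m\setminus H$ are pairwise disjoint, since an element in two of them would lie in $\Theta_i\cap\Theta_j=H$. I would then pick, for each $i$, a representative $w_i\in\Theta_i\setminus H$ with $C_\Gamma(w_i)=\Theta_i$ --- equivalently, one for which $\Theta_i$ is the unique maximal Abelian subgroup of $\Gamma$ containing $w_i$ --- and set $W=\{w_1,\dots,w_m\}$. With such a choice, for $i\neq j$ the vertices $w_i,w_j$ are non-adjacent in $CG(\Gamma)$ (otherwise $\langle w_i,w_j\rangle$ is Abelian, hence contained in the unique maximal Abelian subgroup $\Theta_i$ through $w_i$, forcing $w_j\in\Theta_i\cap\Theta_j=H$), and every common neighbor $x$ of $w_i$ and $w_j$ satisfies $x\in H$ (because $\langle w_i,x\rangle$ Abelian gives $x\in\Theta_i$ and, symmetrically, $x\in\Theta_j$).

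The lower bound then runs exactly as for the graph $J$ in Theorem~\ref{Thm:5}. Suppose $CG(\Gamma)$ had a rainbow coloring with the two colors $1$ and $2$. Each $w_i$ is adjacent to every element of $H$ since $H\cup\{w_i\}\subseteq\Theta_i$, so assign to $w_i$ the tuple $\pi(w_i)\in\{1,2\}^{|H|}$ recording the colors of the edges from $w_i$ to the vertices of $H$. As $|W|=m>2^{|H|}$, two indices $i\neq j$ satisfy $\pi(w_i)=\pi(w_j)$. A rainbow path under a $2$-coloring has at most two edges, and $w_iw_j$ is not an edge, so (using the two facts just established) a rainbow $w_i$--$w_j$ path would have to be of the form $w_i\,h\,w_j$ with $h\in H$; but its two edges carry the colors $\pi(w_i)_h=\pi(w_j)_h$, so no such rainbow path can exist. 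This contradiction forces $rc(CG(\Gamma))\ge 3$, hence $rc(CG(\Gamma))=3$.

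The step I expect to be the genuine obstacle is the first one: showing that each $\Theta_i$ really does contain a representative $w_i\in\Theta_i\setminus H$ with $C_\Gamma(w_i)=\Theta_i$. This can fail for an arbitrary maximal Abelian subgroup --- the normal Klein four-subgroup $V\le S_4$ is maximal Abelian, yet each of its involutions has centralizer of order $8$ --- so the proof has to use that $\Theta_i$ lies in the family $\mathscr{T}$ of $m>2^{|H|}$ maximal Abelian subgroups whose pairwise intersections all equal $H$ (for instance $V$ cannot lie in such a family, since only three maximal Abelian subgroups of $S_4$ contain a prescribed double transposition). I would argue that if no element of $\Theta_i\setminus H$ had centralizer exactly $\Theta_i$, then each element of $\Theta_i\setminus H$ would lie in a maximal Abelian subgroup different from $\Theta_i$, and analyzing how those subgroups meet the remaining $\Theta_k$ --- each of which already meets $\Theta_i$ in precisely $H$ --- together with the cap of three on order-$2$ maximal Abelian subgroups, should force $m\le 2^{|H|}$, a contradiction. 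The boundary case $|H|=1$ (so $H=\{e\}$ and $m\ge 3$) I would handle directly, replacing the split-graph input of Theorem~\ref{Thm:5} by an elementary computation on the generalized friendship graph induced by $\bigcup_i\Theta_i$, in which $e$ is the cut vertex separating the blades $\Theta_i\setminus\{e\}$ and the pigeonhole runs over the single color at $e$.
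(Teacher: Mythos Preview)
Your outline matches the paper's: Theorem~\ref{Thm:3} gives $rc(CG(\Gamma))\le 3$, and the lower bound is a pigeonhole on the $|H|$-tuples of colours carried by the edges from chosen representatives $w_i$ into $H$. The paper packages the lower bound through Theorem~\ref{Thm:5} (applied to $X=\bigcup_k\Theta_k$) and then asserts, without further argument, that every path in $CG(\Gamma)$ between non-adjacent $a\in\Theta_i\setminus H$ and $b\in\Theta_j\setminus H$ that uses an edge outside $CG(\Gamma,X)$ has length at least~$3$; in the $|H|=1$ case it likewise asserts that the only length-$2$ paths between the chosen $c_i$'s pass through $e$. These assertions are exactly your ``all common neighbours of $w_i$ and $w_j$ lie in $H$'' step, and the paper offers no justification for them --- in particular it imposes no centralizer condition on its representatives and simply takes arbitrary $w_i\in\Theta_i\setminus H$.

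Your instinct that this is the crux is correct, but your proposed cure has a genuine gap. The condition $C_\Gamma(w_i)=\Theta_i$ can fail even under the hypotheses of the theorem: in $S_4$ take $\mathscr{T}=\{V,\langle(123)\rangle,\langle(124)\rangle\}$, where $V$ is the normal Klein four-group; then $H=\{e\}$, $m=3>2^{1}$, $S_4$ has no maximal Abelian subgroups of order~$2$, and $V$ is a legitimate member of $\mathscr{T}$, yet no $w_1\in V\setminus\{e\}$ has $C_{S_4}(w_1)=V$. So the representatives you want need not exist, and your fallback sketch (``each element of $\Theta_i\setminus H$ lies in a second maximal Abelian subgroup, and analysing how those meet the remaining $\Theta_k$ should force $m\le 2^{|H|}$'') supplies no counting mechanism --- those auxiliary maximal Abelian subgroups need not lie in $\mathscr{T}$, and nothing bounds their number against $m$. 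What the argument actually requires is only the weaker claim that for \emph{some} choice of representatives every common neighbour of $w_i$ and $w_j$ lies in $H$; in the $S_4$ example this still holds (for instance because $|C_{S_4}(w_1)|=8$ and $|C_{S_4}(w_2)|=3$ are coprime), but neither you nor the paper establishes it in general.
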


\begin{proof}[Proof]
	Let $|Z(\Gamma)|\geq2$, $m > 2^{|H|}$, and $X = \bigcup\limits_{i=1}^{m} \Theta_{i}$. According to Theorem~\ref{Thm:3}, $rc(CG(\Gamma))\leq 3$. Since $|Z(\Gamma)|\geq2$, we get $|H|\geq 2$. According to Theorem~\ref{Thm:5}, $rc(CG(\Gamma, X))= 3$. It is obvious that $CG(\Gamma, X)$ is a subgraph of $CG(\Gamma)$. The length of any path $P$ in $CG(\Gamma)$, whose $E(P)$ is not the subset of $E(CG(\Gamma, X))$, that connects any two nonadjacent vertices $a\in \Theta_i\setminus H$ and $b\in \Theta_j\setminus H$ for $i\ne j$ is at least $3$. Therefore, we need an edge coloring with at least 3 colors to make $CG(\Gamma)$ rainbow-connected, and hence $rc(CG(\Gamma))\geq 3$. Since $rc(CG(\Gamma))\leq 3$, we get $rc(CG(\Gamma))= 3$.
	
	Now let $|Z(\Gamma)|=1$, $m > 2^{|H|}$, and $\Gamma$ has at most $3$ maximal abelian subgroups of order $2$. According to Theorem~\ref{Thm:3}, $rc(CG(\Gamma))\leq 3$. Let $X = \bigcup\limits_{i=1}^{m} \Theta_{i}$ and $|H|=1$. Since $|Z(\Gamma)|= |H| = 1$, we get $Z(\Gamma) = H$ and $m \geq 3$. Suppose that $rc(CG(\Gamma, X))=2$. Choose three non-identity elements $c_1\in \Theta_1$, $c_2\in \Theta_2$, and $c_3\in \Theta_3$. Clearly, every pair of the three elements are not adjacent in $CG(\Gamma,X)$, and also in $CG(\Gamma,)$ . Since $\Theta_i\cap \Theta_j = \{e\}$ for every $i,j\in \{1, \dots, m\}$, where $i\ne j$, the only paths of length 2 in $CG(\Gamma,X)$, and also in $CG(\Gamma)$, connecting every pair of the three elements are $c_1ec_2$, $c_1ec_3$, and $c_2ec_3$. Without loss of generality, if we color the edge $c_1e$ with color 1, the edge $c_2e$ with color 2, and the edge $c_3e$ with color 2, then $c_1ec_2$ is the rainbow path connecting $c_1$ and $c_2$, and  $c_1ec_3$ is the rainbow path connecting $c_1$ and $c_3$. But there is no rainbow path connecting $c_2$ and $c_3$. Hence, $rc(CG(\Gamma,X))$, and also $rc(CG(\Gamma))$, must be at least 3. Since $rc(CG(\Gamma))\leq 3$, we get $rc(CG(\Gamma))= 3$. If $|H|\geq2$, using a similar proof as in the case of $|Z(\Gamma)|\geq2$, we get $rc(CG(\Gamma))\geq 3$. Since $rc(CG(\Gamma))\leq 3$, we get $rc(CG(\Gamma))= 3$.
\end{proof}	

For a finite nonabelian group $\Gamma$ with a nontrivial center which the intersection of every pair of its maximal abelian subgroups equals its center, Theorem~\ref{Thm:5} immediately gives us the following corollary.

\begin{corollary}\label{Corr:1}
	Let $\Gamma$ be a finite nonabelian group with a nontrivial center $Z(\Gamma)$, $\mathscr{C}$ be the collection of all maximal abelian subgroups of $\Gamma$, and the intersection of every two members of $\mathscr{C}$ equals $Z(\Gamma)$.
	\begin{enumerate}
		\item $rc(CG(\Gamma))=2$ if and only if $|\mathscr{C}|\leq 2^{|Z(\Gamma)|}$,
		\item $rc(CG(\Gamma))=3$ if and only if $|\mathscr{C}|> 2^{|Z(\Gamma)|}$.
	\end{enumerate}
\end{corollary}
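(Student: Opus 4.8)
The plan is to recognize this corollary as the special case of Theorem~\ref{Thm:5} in which the subset $H$ is taken to be the center $Z(\Gamma)$ and the family $\mathscr{T}$ is taken to be the \emph{entire} collection $\mathscr{C}$ of maximal abelian subgroups of $\Gamma$. First I would verify that the hypotheses of Theorem~\ref{Thm:5} hold for these choices. Since $Z(\Gamma)$ is nontrivial, $|Z(\Gamma)|\geq 2$, so $H=Z(\Gamma)$ satisfies the requirement $|H|\geq 2$. Since $\Gamma$ is nonabelian, $\mathscr{C}$ has at least three members (as recalled in the Preliminaries), so in particular $\mathscr{C}$ is a collection of at least two maximal abelian subgroups. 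Finally, the standing hypothesis of the corollary is precisely that the intersection of any two distinct members of $\mathscr{C}$ equals $Z(\Gamma)=H$. Thus all the conditions required to invoke Theorem~\ref{Thm:5} with $\mathscr{T}=\mathscr{C}$ are met.

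The only point needing a brief argument is that the set $X:=\bigcup_{\Theta\in\mathscr{C}}\Theta$ appearing in Theorem~\ref{Thm:5} is all of $\Gamma$. This is exactly the fact recorded in the Preliminaries: every element of a finite group lies in some maximal abelian subgroup (for instance, in one containing the cyclic subgroup it generates), so $\Gamma=\bigcup_{i=1}^{|\mathscr{C}|}\Theta_i$. Consequently $CG(\Gamma,X)=CG(\Gamma)$, and $|\mathscr{T}|=|\mathscr{C}|$, $|H|=|Z(\Gamma)|$.

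With these identifications in place, the corollary follows by substituting $CG(\Gamma)$ for $CG(\Gamma,X)$ and $|\mathscr{C}|$ for $|\mathscr{T}|$ in the two equivalences of Theorem~\ref{Thm:5}, giving $rc(CG(\Gamma))=2$ if and only if $|\mathscr{C}|\leq 2^{|Z(\Gamma)|}$, and $rc(CG(\Gamma))=3$ if and only if $|\mathscr{C}|>2^{|Z(\Gamma)|}$. As a consistency check, Theorem~\ref{Thm:3} independently forces $rc(CG(\Gamma))\leq 3$ because $Z(\Gamma)$ is nontrivial, while nonabelianness forces $rc(CG(\Gamma))\geq 2$, so $rc(CG(\Gamma))\in\{2,3\}$ always holds in this situation. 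There is essentially no obstacle here: the whole content is the observation that choosing $\mathscr{T}=\mathscr{C}$ is legitimate and that the associated union $X$ is the entire group, which is what makes $CG(\Gamma,X)$ coincide with $CG(\Gamma)$ so that Theorem~\ref{Thm:5} applies verbatim.
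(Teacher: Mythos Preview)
Your proposal is correct and matches the paper's own justification: the paper simply states that Theorem~\ref{Thm:5} ``immediately gives'' this corollary, and your argument spells out exactly the intended specialization $H=Z(\Gamma)$, $\mathscr{T}=\mathscr{C}$, together with the observation that $X=\bigcup_{\Theta\in\mathscr{C}}\Theta=\Gamma$ so that $CG(\Gamma,X)=CG(\Gamma)$.
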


The dihedral group $D_6$ and the alternating group $A_4$ are examples of groups that meet the condition of Corollary~\ref{Corr:1}. The presentation of $D_6$ is $D_6 = \langle r, s: r^3 = s^2 = e, srs = r^{-1}\rangle$, where $e$ is the identity element of the group. This group has four maximal abelian subgroups, which are $\{e, r, r^2\}$, $\{e, s\}$, $\{e, rs\}$, and $\{e, r^2s\}$. There are three maximal abelian subgroups of order 2 in this group. The center of this group is $Z(D_6)=\{e\}$ and the intersection of every two distinct maximal abelian subgroups is $Z(D_6)$. The center of group $A_4$ is $Z(A_4)=\{(1)\}$, which is the identity permutation. This group has five maximal abelian subgroups. The maximal abelian subgroups of $A_4$ are $\{(1), (123), (132)\}$, $\{(1), (124), (142)\}$, $\{(1), (134), (143)\}$, $\{(1), (234), (243)\}$, and $\{(1), (12)(34), (13)(24), (14)(23)\}$. This group has no maximal abelian subgroup of order 2. The intersection of every two distinct maximal abelian subgroups of $A_4$ is $Z(A_4)$. According to Corollary~\ref{Corr:1}, $rc(CGD_6))=rc(CG(A_4))=3$. Figure 3 shows the minimum rainbow colorings of $CG(A_4)$ and $CG(D_6)$ with three colors: color 1, color 2, and color 3.
\begin{figure}[ht]
	\centering
	\includegraphics[scale=0.9]{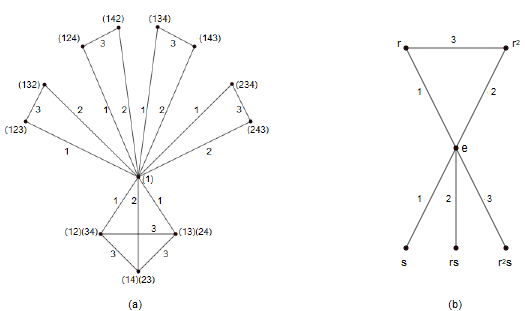}
	\caption{(a) A minimum rainbow coloring of $CG(A_4)$, (b) A minimum rainbow coloring of $CG(D_6)$.\label{fig:a4}}
\end{figure}

An example of a finite nonabelian group that meets the condition of Theorem~\ref{Thm:6} is the generalized quaternion group $Q_{4n}$, where $n\geq2$, with presentation $Q_{4n} = \langle a, b: a^n = b^2, a^{2n}=e, b^{-1}ab = a^{-1}\rangle $. The center of the group is $Z(Q_{4n})=\{e,a^n\}$. Hence, $|Z(Q_{4n})|=2$. The maximal abelian subgroups of the group are $H_i = \{e, a^n, a^ib, a^{n+i}b\}$ for $i \in \{0,1,\dots,n-1\}$ and $H_n = \{e, a, a^2, \dots, a^{2n-1}\}$. Therefore, the group has $n+1$ maximal abelian subgroups and it is obvious that $Z(\Gamma)=H_i\cap H_j$ for $i,j\in \{0,2,\dots,n\}$ where $i\ne j$. If $n\geq 4$, then $Q_{4n}$ has at least 5 maximal abelian subgroups. Hence, according to Theorem~\ref{Thm:6}, $rc(CG(Q_{4n}))=3$ if $n\geq4$. Figure 4 shows a minimum rainbow coloring of the group $Q_{16}$ with three colors: color 1, color 2, and color 3.
\begin{figure}[ht]
	\centering
	\includegraphics[scale=0.8]{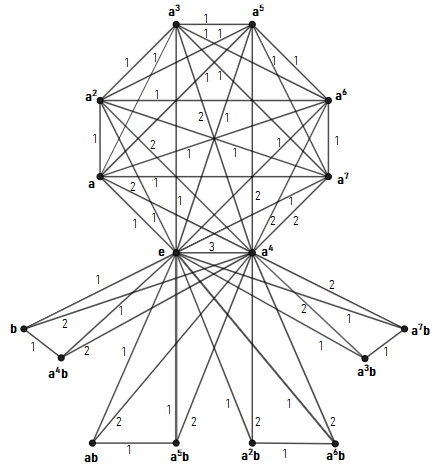}
	\caption{A minimum rainbow coloring of the commuting graph of group $Q_{16}$ with three colors. 
		\label{fig:pewarnaansd8n}}
\end{figure}

So far we have discussed the finite nonabelian groups whose rainbow connection numbers of their commuting graphs are less than or equal to 3. In Theorem~\ref{Thm:7}, we characterize a finite nonabelian group whose commuting graph has rainbow connection number at least 4.
\begin{theorem}\label{Thm:7}
	For an integer $t\geq 4$, $rc(CG(\Gamma))=t$ if and only if $\Gamma$ is a finite nonabelian group with a trivial center and has exactly $t$ maximal abelian subgroups of order 2.
\end{theorem}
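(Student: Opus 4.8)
The plan is to derive the result directly from Theorem~\ref{Thm:3} together with one explicit colouring, in the same spirit as the $1\le t\le 3$ case treated in the proof of Theorem~\ref{Thm:3}. I would first prove the ``if'' direction and then read off the ``only if'' direction from it.

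For the ``if'' direction, assume $\Gamma$ is a finite nonabelian group with trivial centre having exactly $t\ge 4$ maximal abelian subgroups of order $2$, say $\{e,x_1\},\dots,\{e,x_t\}$. Each $x_i$ is an involution commuting with no other nonidentity element, hence a pendant vertex of $CG(\Gamma)$ adjacent only to $e$, so the only path in $CG(\Gamma)$ joining $x_i$ to $x_j$ is $x_iex_j$; thus any rainbow colouring must give the $t$ edges $ex_1,\dots,ex_t$ pairwise distinct colours, and $rc(CG(\Gamma))\ge t$ (this is also the pendant-vertex bound of \cite{Char}). For the reverse inequality I would re-use the colouring from the proof of Theorem~\ref{Thm:3}, now with $t$ rather than $3$ colours available: write $\hat C=(\Gamma\setminus\bigcup_{i=1}^t\{e,x_i\})\cup\{e\}=\{e,\hat c_1,\dots,\hat c_s\}$ in an order with $\hat c_i$ commuting with $\hat c_{i-1}$ or $\hat c_{i+1}$ (such an order exists exactly as in the proof of Theorem~\ref{Thm:3}, since every element of $\hat C\setminus\{e\}$ lies in a maximal abelian subgroup of order at least $3$), colour $ex_i$ with colour $i$ for $i\in\{1,\dots,t\}$, colour $e\hat c_i$ with colour $1$ if $i$ is odd and colour $2$ if $i$ is even, and colour every remaining edge with colour $3$. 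This uses exactly $t$ colours. One then checks that every pair of vertices is joined by a rainbow path: pairs inside $\hat C$ are handled by the paths of the $t=0$ case of Theorem~\ref{Thm:3} (which use only colours $1,2,3$); $x_i$ and $x_j$ by $x_iex_j$; $x_i$ and $\hat c_j$ with $i\ge 3$ by $x_ie\hat c_j$ (colours $i$ and $1$ or $2$, hence distinct); and $x_1$ (resp.\ $x_2$) and $\hat c_j$ by exactly the paths used for $\bar c_1$ (resp.\ $\bar c_2$) in the proof of Theorem~\ref{Thm:3}, namely $x_1e\hat c_j$ when $j$ is even and the length-$3$ detour $x_1e\hat c_{j\mp 1}\hat c_j$ (colours $1,2,3$) when $j$ is odd, and symmetrically for $x_2$. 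Hence $rc(CG(\Gamma))\le t$, so $rc(CG(\Gamma))=t$.

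For the ``only if'' direction, suppose $rc(CG(\Gamma))=t$ with $t\ge 4$. Since $t>3$, Theorem~\ref{Thm:3} forces $\Gamma$ to have a trivial centre and at least four maximal abelian subgroups of order $2$; let $t'\ge 4$ be their exact number. Applying the ``if'' direction just proved to $\Gamma$ and the integer $t'$ gives $rc(CG(\Gamma))=t'$, whence $t=t'$ and $\Gamma$ has exactly $t$ maximal abelian subgroups of order $2$, as required.

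I expect no serious obstacle: the theorem is essentially a packaging of Theorem~\ref{Thm:3} with the pendant-vertex lower bound. The only point needing care is the verification in the ``if'' direction that the ready-made colouring rainbow-connects the pairs $\{x_1,\hat c_j\}$ and $\{x_2,\hat c_j\}$, since the pendant colour $1$ or $2$ can coincide with the colour of $e\hat c_j$; this is resolved by the same length-$3$ detours $x_1e\hat c_{j\mp 1}\hat c_j$ and $x_2e\hat c_{j\mp 1}\hat c_j$ as in the proof of Theorem~\ref{Thm:3}, which are rainbow because their three edges carry colours $1,2,3$. (One also tacitly uses, as there, that the ordering of $\hat C$ with the stated commuting property exists; the justification is identical to the one underlying the proof of Theorem~\ref{Thm:3}.)
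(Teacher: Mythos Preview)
Your proposal is correct and follows essentially the same approach as the paper: the pendant-vertex lower bound gives $rc(CG(\Gamma))\ge t$, the identical $t$-colouring (pendant edges $e x_i$ with colour $i$, edges $e\hat c_i$ alternating colours $1$ and $2$, all other edges colour $3$) with the same rainbow-path verifications gives $rc(CG(\Gamma))\le t$, and the ``only if'' direction is read off from Theorem~\ref{Thm:3} together with the ``if'' direction exactly as you do.
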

\begin{proof}
	Let $t\geq 4$ be an integer and $\Gamma$ be a finite nonabelian group with a trivial center and has exactly $t$ maximal abelian subgroups of order 2. Let $e$ be the identity element of $\Gamma$ and $\mathscr{C}=\{C_1,\dots, C_m\}$ be the collection of all maximal abelian subgroups of $\Gamma$, where $m\geq4$. Form the sets $\bar{\mathscr{C}}$ and $\hat{C}$ as in the proof of Theorem~\ref{Thm:3}. Recall that since $|\bar{\mathscr{C}}|=t$, the commuting graph $CG(\Gamma)$ has exactly $t$ pendant edges. Therefore, $rc(CG(\Gamma))\geq t$. Next, we color the edges of $CG(\Gamma)$ with $t$ colors as follows:
	\begin{enumerate}
		\item the edge $e\bar{c}_i$ is colored with color $i$ for every $i \in \{1,2,\dots,t\}$,
		\item the edge $e\hat{c}_i$ is colored with color 1 if $i$ is odd or color 2 if $i$ is even for every $i \in \{1,2,\dots,s\}$,
		\item the other edges are colored with color 3.
	\end{enumerate}
	The rainbow paths of $CG(\Gamma)$ under this edge coloring are as follows.
	\begin{enumerate}
		\item For $i,j \in \{1,2,\dots,t\}$, where $i\ne j$, the rainbow path between $\bar{c}_i$ and $\bar{c}_j$ is $\bar{c}_ie\bar{c}_j$.
		\item For every $j\in\{1, 2, \dots, s\}$, the rainbow paths between $\bar{c}_1$ and $\hat{c}_j$, and between $\bar{c}_2$ and $\hat{c}_j$, are the same as the paths in the case of $t=2$ in Theorem~\ref{Thm:3}.
		\item For every $i \in \{3,\dots,t\}$ and every $j \in \{1,2,\dots,s\}$, the rainbow path between $\bar{c}_i$ and $\hat{c}_j$ is $\bar{c}_ie\hat{c}_j$.
		\item The rainbow paths between $\hat{c}_i$ and $\hat{c}_j$, where $i, j\in\{1, 2, \dots, s\}$, are the same as the paths between $\hat{c}_i$ and $\hat{c}_j$ in the case of $t=0$ in Theorem~\ref{Thm:3}.		 
	\end{enumerate}
	Since every two distinct vertices of $CG(\Gamma)$ are connected by a rainbow path under this edge coloring, we get $rc(CG(\Gamma))\leq t$. Since $rc(CG(\Gamma))\geq t$, we get $rc(CG(\Gamma))= t$, where $t\geq4$. 
	
	Now let $rc(CG(\Gamma))= t$, where $t\geq4$. According to Theorem~\ref{Thm:3}, $\Gamma$ is a finite nonabelian group with a trivial center and has exactly $s$ maximal abelian subgroups of order 2, where $s\geq4$. Hence, $CG(\Gamma)$ has exactly $s$ pendant edges and $s\leq t$. According the previous result, we can color the edges of $CG(\Gamma)$ by $s$ colors such that every two distinct vertices of $CG(\Gamma)$ are connected by a rainbow path. Therefore, $rc(CG(\Gamma))\leq s$. Since $rc(CG(\Gamma))= t$ and $s\leq t$, we get $s=t$.
\end{proof}

All results above show that the rainbow connection number of the commuting graph of a finite nonabelian group with nontrivial center is related to the number of maximal abelian subgroups of the group. Now recall that a maximal abelian subgroup of order 2 of a finite nonabelian group $\Gamma$ consists of the identity element of $\Gamma$ and an involution that do not commute with any other nonidentity element of $\Gamma$. Hence, for a natural number $t$, $\Gamma$ has exactly $t$ maximal abelian subgroups of order 2 if and only if $\Gamma$ has exactly $t$ involutions that do not commute with any other nonidentity element of the group. Thus, Theorem~\ref{Thm:7} gives us the following corollary.

\begin{corollary}
	For a natural number $t\geq 4$, $rc(CG(\Gamma))=t$ if and only if $\Gamma$ is a finite nonabelian group with a trivial center and has exactly $t$ involutions that do not commute with any other nonidentity element of $\Gamma$.
\end{corollary}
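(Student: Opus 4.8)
The plan is to reduce the corollary to Theorem~\ref{Thm:7} by establishing a bijection between the maximal abelian subgroups of order $2$ of $\Gamma$ and the involutions of $\Gamma$ that commute with no other nonidentity element of $\Gamma$. Once this bijection is in hand, the condition ``$\Gamma$ has exactly $t$ maximal abelian subgroups of order $2$'' appearing in Theorem~\ref{Thm:7} translates verbatim into ``$\Gamma$ has exactly $t$ involutions that do not commute with any other nonidentity element of $\Gamma$,'' and the corollary follows.

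First I would verify one direction of the correspondence. Suppose $A$ is a maximal abelian subgroup of $\Gamma$ with $|A|=2$. Since the identity $e$ lies in every maximal abelian subgroup, we may write $A=\{e,a\}$ with $a\ne e$; because $A$ is a subgroup, $a^2\in A$, and $a^2=a$ would force $a=e$, so $a^2=e$ and $a$ is an involution. If $a$ commuted with some nonidentity element $b\ne a$, then $\langle a,b\rangle$ would be an abelian subgroup properly containing $A$ (it contains $b\notin A$), contradicting maximality. Hence $a$ commutes with no nonidentity element other than itself.

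Next I would verify the converse. Suppose $a$ is an involution of $\Gamma$ that commutes with no nonidentity element other than itself. Then $\langle a\rangle=\{e,a\}$ since $a^2=e$, so $A:=\{e,a\}$ is an abelian subgroup of order $2$. If $A$ were contained in a strictly larger abelian subgroup $B$, then $B$ would contain some $b$ with $b\ne e$ and $b\ne a$, and $b$ would commute with $a$, a contradiction. So $A$ is a maximal abelian subgroup of order $2$. The assignments $A\mapsto$ (the unique involution in $A$) and $a\mapsto\{e,a\}$ are mutually inverse, so they give the claimed bijection; in particular $\Gamma$ has exactly $t$ maximal abelian subgroups of order $2$ if and only if it has exactly $t$ involutions commuting with no other nonidentity element. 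Substituting this equivalence into Theorem~\ref{Thm:7} completes the proof.

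I do not anticipate a genuine obstacle here, since the argument is a routine translation of hypotheses. The only points demanding care are to state the maximality of $\{e,a\}$ as equivalent to $a$ commuting with no \emph{other} nonidentity element (as $a$ trivially commutes with itself), and to note that the involution hypothesis is exactly what guarantees $|\langle a\rangle|=2$, so that the two constructions really are inverse to one another.
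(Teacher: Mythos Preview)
Your proposal is correct and follows exactly the approach the paper takes: the paper simply observes (in the paragraph immediately preceding the corollary) that maximal abelian subgroups of order $2$ are in bijection with involutions commuting with no other nonidentity element, and then invokes Theorem~\ref{Thm:7}. Your writeup just fills in the details of that bijection, which the paper leaves implicit.
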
	

According to Corollary 2, if $rc(CG(\Gamma))\geq 4$ for a finite group $\Gamma$, then $\Gamma$ must be a group with a trivial center and has $rc(CG(\Gamma))$ involutions  that do not commute with any other nonidentity element of $\Gamma$. If $\Gamma$ is a finite group with a trivial center and $rc(CG(\Gamma))< 4$, then we can be sure that $\Gamma$ has less than four involutions  that do not commute with any other nonidentity element of $\Gamma$.


\bibliographystyle{amsplain}

\end{document}